\newtheorem{theo}{Theorem}
\newtheorem{prop}[theo]{Proposition}
\newtheorem{lemma}[theo]{Lemma}
\theoremstyle{remark}
\newtheorem{rem}{Remark}
\newtheorem{claim}[theo]{Claim}
\begin{document}

\title{The critical 1-arm exponent for the ferromagnetic Ising model on the Bethe lattice} 

\author{Markus Heydenreich}
\email[]{E-mail: m.heydenreich@lmu.de}
\altaffiliation{}
\noaffiliation

\author{Leonid Kolesnikov}
\email[]{E-mail: l\_kolesnikov@gmx.de}
\altaffiliation{}
\noaffiliation

\affiliation{Mathematisches Institut, Ludwig-Maximilians-Universität München, Theresienstr. 39, 80333 München, Germany.}



\date{\today}

\begin{abstract}
We consider the ferromagnetic nearest-neighbor Ising model on regular trees (Bethe lattice), which is well-known to undergo a phase transition in the absence of an external magnetic field. The behavior of the model at critical temperature can be described in terms of various critical exponents; one of them is the critical 1-arm exponent $\rho$, which characterizes the rate of decay of the (root) magnetization. The crucial quantity we analyze in this work is the thermal expectation of the root spin on a finite subtree, where the expected value is taken with respect to a probability measure related to the corresponding finite-volume Hamiltonian with a fixed boundary condition. The spontaneous magnetization, which is the limit of this thermal expectation in the distance between the root and the boundary (i.e.\ in the height of the subtree), is known to vanish at criticality. We are interested in a quantitative analysis of the rate of this convergence in terms of the critical 1-arm exponent $\rho$. Therefore, we rigorously prove that $\langle\sigma_0\rangle^+_n$, the thermal expectation of the root spin at the critical temperature and in the presence of the positive boundary condition, decays as $\langle\sigma_0\rangle^+_n\approx n^{-\frac{1}{2}}$ (in a rather sharp sense), where  $n$ is the height of the tree. This establishes the 1-arm critical exponent for the Ising model on regular trees ($\rho=\frac{1}{2}$).
\end{abstract}

\pacs{}
\maketitle

\section{Introduction}\label{I}
The ferromagnetic Ising model is one of the most extensively studied models in statistical mechanics. Usually, $\mathbb{Z}^d$ is considered as the underlying graph. Ernst Ising proved in his 1924 thesis\cite{5} that the one-dimensional model does not undergo a phase transition. Using a technique which is now  known as the Peierls argument, Rudolf Peierls\cite{7} showed that for $d\geq 2$ a phase transition occurs in the absence of an external magnetic field. On the other hand, in the presence of a non-zero homogeneous field the model has uniqueness, as was shown by Lee and Yang\cite{6}. Various results were obtained for different types of inhomogeneous external fields (e.g., in Ref.\ \onlinecite{2} and Ref.\ \onlinecite{3}).

Concerning the Ising model on a regular tree (Bethe lattice), Preston\cite{8} showed that there is a phase transition not only in the zero external field, but even in a homogeneous non-zero field - if the field strength does not exceed a certain critical value, which depends both on the structure of the tree and the temperature of the system. However, from now on we deal with the nearest-neighbor Ising model in the absence of an external field.

On $\mathbb{Z}^d$ we consider the expected spin value $\langle\sigma_0\rangle^+_r$ at the center of a ball of radius $r$ with fixed plus spins assigned to its boundary and let $r$ grow towards infinity. As a function of temperature, $\lim_{r\to\infty} \langle\sigma_0\rangle^+_r$ is called spontaneous magnetization and was proven to vanish at criticality. The case $d=2$ was shown in the 1950's by Yang\cite{10} (after Lars Onsager famously announced the formula for the spontaneous magnetization in two dimensions  without giving a derivation), the case $d>2$ only in 2016 by Aizenman, Duminil-Copin and Sidoravicius\cite{1} (though for $d>3$ the proof  was given already in the 80's; see Ref.\ \onlinecite{b}). Handa, Heydenreich and Sakai\cite{4} provided some quantitative analysis of the rate of this convergence for high dimensions ($d\geq 5$). They conjectured that the critical 1-arm exponent for this model equals 1, i.e.\ that at criticality $\langle\sigma_0\rangle^+_r\approx r^{-1}$ as $r\to\infty$. Using  the so-called random-current representation, they were able to show that this critical exponent is bounded from above by 1.

On a regular tree we can proceed analogously: Instead of  a ball of radius $r$, we consider a subtree of height $n$ with a plus boundary condition and look at the limit of the expected spin values $\langle\sigma_0\rangle^+_n$ at the root of the tree as $n$ converges to infinity. By Preston\cite{8} we know that this limit - the spontaneous magnetization - vanishes at criticality, i.e $\lim_{n\to\infty} \langle\sigma_0\rangle^+_n=0$. Again, we are interested in the rate of this convergence. In Ref.\ \onlinecite{4}, the authors make an educated guess that the critical 1-arm exponent equals $\frac{1}{2}$ on regular trees, i.e.\ that at criticality $\langle\sigma_0\rangle^+_n\approx n^{-\frac{1}{2}}$ as $n\to\infty$.
Our main goal is to confirm this conjecture by a rigorous proof.

The ``nice" structure of the Bethe lattice allows us to determine the critical 1-arm exponent using much simpler tools than the ones used to obtain the analogous (partial) result for $\mathbb{Z}^d$ in Ref.\ \onlinecite{4}. To simplify the proofs even further, we consider a minor modification of the Bethe lattice as the underlying graph - by requiring the same number of children for each vertex instead of the same number of neighbors. We expect the qualitative behavior of the model to be invariant under this modification.

Now, we proceed to formally introduce the model and provide the necessary notation.

\subsection{Regular Cayley trees and rooted Cayley trees}
A Bethe lattice or Cayley tree $\Gamma^d$ is an infinite $(d+1)$-regular tree, i.e.\ an infinite connected cycle-free graph, such that every vertex has exactly $d+1$ neighbors. Let $V$ denote the set of vertices  and $L\subset\binom{V}{2}$  the set of edges of $\Gamma^d$. For $x,y\in V$ the distance $d(x,y)$ is given by the length of the shortest path connecting $x$ and $y$. In particular, $d(x,y)=1$ if and only if $x$ and $y$ are neighbors.

We fix some $0\in V$ and call it the root of $\Gamma^d$, then for all $n\in\mathbb{N}_{0}$ we define the $n$-th generation by $
W_n=\{x\in V\vert~ d(0,x)=n\}
$
and the finite volume of height $n$ by 
$
V_n=\{x\in V\vert~ d(0,x)\leq n\}.$ Denote the edges of $\Gamma^d$ restricted to $V_n$ by $L_n$, i.e.\ $L_n=L\cap\binom{V_n}{2}.$
For $x \in W_n$, define the set of children of $x$ by
$
S(x)=\{y\in W_{n+1}\vert~d(x,y)=1\}.
$

Notice that $\vert S(x)\vert=d$ for all $x\in V_{n} \ensuremath{\backslash} \{0\}$, while $\vert S(0) \vert=d+1$. We are interested in a modification of the Bethe lattice, such that $\vert S(x) \vert=d$ for all $x\in V$. To obtain it, it suffices to slightly relax the regularity assumption:

A rooted Cayley tree $\Gamma_r^d=(V,L)$ is an infinite tree, such that all but one vertices have $d+1$ neighbors, while vertex $0\in V$ has $d$ neighbors. We call this distinguished vertex the root of the tree; then we can define the sets $W_n$, $V_n$ and $S(x)$ just as above for the regular Cayley tree. Obviously, every vertex of $\Gamma_r^d$ has exactly $d$ children.

We refer to $ \Gamma^d$ as the regular Cayley tree when we want to emphasize its distinction from the rooted Cayley tree $\Gamma_r^d$.

Next, we introduce the ferromagnetic nearest-neighbor Ising model on the (rooted) Cayley tree.

\subsection{Ferromagnetic nearest-neighbor Ising model}
In our model, spins of value $+1$ or $-1$ are assigned to every vertex of the tree. The set of all possible spin  configurations on $V$ is defined by $\Omega=\{-1,+1\}^V.$ For any subset $U\subset V$, let $\Omega_U=\{-1,+1\}^U$ denote the set of all possible configurations on $U$. For a configuration $\sigma\in\Omega_U$ and a vertex $x\in$ $U$, let $\sigma_x\in\{-1,+1\}$ denote the spin assigned to $x$ in this configuration. For disjoint sets $U$ and $V$, let $\sigma\in \Omega_U$ and $\omega\in \Omega_V$, then the concatenation of $\sigma$ and $\omega$ is defined by
\begin{align*}
(\omega \vee \sigma)_x=
\begin{cases}
     \sigma_x & \text{for } x \in U \\
     \omega_x & \text{for } x \in V.
   \end{cases}
\end{align*}

We define the free Hamiltonian of the ferromagnetic nearest-neighbor Ising model on the volume $V_n$  by
\begin{align*}
H^0_n(\sigma)=-\sum_{\{x,y\}\in L_n}\sigma_x\sigma_y,\quad\sigma\in \Omega_{V_n}.
\end{align*}

Furthermore, using the free Hamiltonian, we define for $\eta\in\Omega$ the Hamiltonian of the ferromagnetic nearest-neighbor Ising model with boundary condition $\eta$ on the volume $V_n$ by
\begin{align*}
H^\eta_n(\sigma)=H^0_n(\sigma)-\sum_{x\in W_n}\sum_{y\in S(x)}\sigma_x\eta_y,\quad\sigma\in\Omega_{V_n}.
\end{align*}

For a positive parameter $\beta$, which is called the inverse temperature of the system, we define a probability measure on $\Omega_{V_n}$ (Gibbs measure) by
\begin{align*}
\mu_n^\eta(\sigma)=\frac{\psi_n^\eta(\sigma)}{Z_n^\eta}:=\frac{e^{-\beta H^\eta_n(\sigma)}}{Z_n^\eta},
\end{align*}
where $Z_n^\eta$ is a normalizing constant given by $Z^\eta_n=\sum_{\sigma\in\Omega}\psi_n^\eta(\sigma)=\sum_{\sigma\in\Omega}e^{-\beta H^\eta_n(\sigma)}$.

When $\eta\equiv +1$, we speak of a plus boundary condition and write $H^\eta_n$ as $H^+_n$, $\psi_n^\eta$ as $\psi_n^+$, $Z_n^\eta$ as $Z_n^+$ and $\mu_n^\eta$ as $\mu_n^+$.

For a function $f$ on $\Omega_{V_n}$, the expected value of $f$ with respect to $\mu_n^+$ is called the thermal expectation of $f$ and is denoted by $\langle f\rangle^+_n$. The crucial quantity we analyze in this thesis is the thermal expectation of the spin value at the root $0\in V$:
\begin{align}\label{spont}
\langle\sigma_0\rangle^+_n=\langle\sigma_0\rangle^+_n(\beta,d)=\sum_{\sigma\in\Omega_{V_n}}\sigma_0\mu^+_n(\sigma).
\end{align}
The thermodynamic limit $\lim_{n\to\infty}\langle\sigma_0\rangle^+_n$ as a function of $\beta$ is known as spontaneous magnetization.

\subsection{The model at criticality}

The critical inverse temperature $\beta_c$ is defined by 
\begin{align}\label{defbc}
\beta_c=\beta_c(d)=\sup\{\beta\geq 0:\lim_{n\to\infty}\langle\sigma_0\rangle_n^+=0\}
\end{align}
and is, as already mentioned, non-trivial by Preston\cite{8}. Furthermore, Rozikov\cite{9} showed that
\begin{align}\label{betac}
\beta_c(d)=\tanh^{-1}\left(\frac{1}{d}\right).
\end{align}

The behavior of the Ising model at the critical temperature can best be described by critical exponents. A few examples are the critical exponents $\beta$, $\gamma$ and $\delta$ (see Ref.\ \onlinecite{a} and Ref.\ \onlinecite{b} for definitions), whose values on a regular tree are the same as the mean-field values $\beta=\frac{1}{2}$, $\gamma=1$ and $\delta=3$; cf.\ Chapter 4.8 in Baxter\cite{e}. 
For many statistical physics models, there is an upper critical dimension $d_c$ such that the corresponding model on $\mathbb{Z}^d$ with $d>d_c$ has the same values for the critical exponents as on the tree. Indeed, Aizenman and Fernández\cite{a,b} proved that on $\mathbb{Z}^d$ the so-called ``bubble condition" (which is a summability condition on the 2-spin expectations at criticality) is sufficient to establish the same values for these exponents. Using the infrared bound established by Fröhlich, Simon and Spencer\cite{c}, one can show that the bubble condition is satisfied for the nearest-neighbor Ising model on the lattice in dimension $d>4$ (see Ref.\ \onlinecite{a}), so that for these models the critical exponents indeed assume the mean-field values. 
However, for the critical exponent we deal with in this work, that seems not to be the case.

An important concept of statistical physics is that such critical exponents are universal in the sense that they are invariant under various modifications of the underlying graph (even though some of those might change $\beta_c$). 
For example, Dommers, Giardinà and van der Hofstad\cite{d} proved that the critical exponents $\beta$, $\gamma$ and $\delta$ on \emph{random} trees (as well as a class of tree-like random graphs) have the same value as on the \emph{deterministic} Cayley tree. 
It would be interesting to see whether similar generalizations are valid for the 1-arm exponent.

\subsection{The main result}
In this work we consider the 1-arm exponent $\rho$, which characterizes the decay of $\langle\sigma_0\rangle_n^+$. To be more precise, we call $\rho$ the critical 1-arm exponent, if there exist constants $c, C>0$, such that 
\begin{align}\label{rho}
cn^{-\rho}<\langle\sigma_0\rangle_n^+<Cn^{-\rho}
\end{align}
for $\beta=\beta_c$ and for all  $n\in \mathbb{N}$.

Notice that this is a fairly strong notion of a critical exponent; weaker modes of convergence have been considered in the literature.

Our main result is the following:
\begin{theo}[Critical 1-arm exponent]\label{thm1}~\\
For $d\geq 2$ and $\beta=\beta_c$ there exist constants $c,C>0$ such that for all $n\in\mathbb{N}$\[ cn^{-\frac{1}{2}}<\langle\sigma_0\rangle_n^+<Cn^{-\frac{1}{2}},\] i.e.\ the critical 1-arm exponent of the ferromagnetic nearest-neighbor Ising model on the (rooted) Cayley tree $\rho$ (as defined in \eqref{rho}) equals $\frac{1}{2}$.
\end{theo}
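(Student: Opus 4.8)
The plan is to exploit the recursive structure of the tree to reduce the computation of $\langle\sigma_0\rangle^+_n$ to the iteration of a one-dimensional map, and then to analyze the asymptotics of that iteration. First I would introduce, for each vertex $v$ at distance $j$ from the root, the effective field $\phi_j$ describing the influence of the fixed plus-boundary on the subtree hanging below $v$; by the homogeneity of $\Gamma_r^d$ this depends only on $j$. A standard cavity/transfer computation shows that passing a field $u$ through a single edge multiplies its hyperbolic tangent by $\theta:=\tanh\beta$, i.e.\ the one-edge message is $\operatorname{arctanh}(\theta\tanh u)$, and that the messages arriving from the $d$ children of a vertex simply add. Hence $\phi_{j-1}=g(\phi_j)$ with
\[ g(x)=d\,\operatorname{arctanh}\!\big(\theta\tanh x\big),\]
and, since each $W_n$-vertex feels $d$ fixed plus-spins, the deepest field is $\phi_n=g(\infty)=d\beta$. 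Writing $a_0=d\beta$ and $a_{k+1}=g(a_k)$, one then has $\langle\sigma_0\rangle^+_n=\tanh(a_n)$, and because $\tanh(a_n)\asymp a_n$ as $a_n\to0$ the problem becomes the study of the orbit $(a_k)$.

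Next I would locate the relevant fixed point. At $\beta=\beta_c$ one has $\theta=1/d$ by \eqref{betac}, so $g(0)=0$ and $g'(0)=d\theta=1$: the origin is a \emph{marginally stable} fixed point, which is precisely what produces polynomial rather than exponential decay. A short monotonicity argument shows that globally $g(x)<x$ for $x>0$, so the orbit decreases monotonically, $a_k\downarrow0$, and a Taylor expansion at the origin gives
\[ g(x)=x-c_3\,x^3+O(x^5),\qquad c_3=\frac{d^2-1}{3d^2}>0.\]
From this I would extract, on a fixed interval $(0,x_0]$, the two-sided cubic sandwich $x-C_1x^3\le g(x)\le x-c_1x^3$ with constants $0<c_1\le C_1$.

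The asymptotic heart of the argument is the reciprocal-square substitution $b_k:=a_k^{-2}$. The bounds $x-C_1x^3\le g(x)\le x-c_1x^3$ translate, via $(1-u)^{-2}=1+2u+O(u^2)$ and the identity $a_k^2 b_k=1$, into $2c_1\le b_{k+1}-b_k\le C_2$ for every $k$ with $a_k\le x_0$. Telescoping then gives $b_k\asymp k$, hence $a_k\asymp k^{-1/2}$, and combined with $\langle\sigma_0\rangle^+_n=\tanh(a_n)\asymp a_n$ this yields the desired two-sided bound $c\,n^{-1/2}<\langle\sigma_0\rangle^+_n<C\,n^{-1/2}$.

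The step I expect to be the main obstacle is making this last analysis rigorous \emph{for every} $n$ rather than merely asymptotically. Two points require care. First, the cubic sandwich is valid only once $a_k\le x_0$, so I must show that the orbit enters $(0,x_0]$ after a number of steps bounded uniformly in $n$; this follows from $a_0=d\beta_c$ being a fixed constant together with the strict global inequality $g(x)<x$, but it must be quantified. Second, the increments $b_{k+1}-b_k$ have to be controlled uniformly, including the $O(x^5)$ remainder of $g$ and the $O(u^2)$ term in the expansion of $(1-u)^{-2}$, so that the telescoping of $b_k$ produces genuine constants $c,C$ valid from $n=1$ onward. Once the transient is absorbed into these constants, the conclusion follows.
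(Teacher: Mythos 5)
Your proposal is sound, and its first half is the paper's own reduction in disguise: the substitution $z=e^{-2u}$ conjugates the paper's iterated map $g(z)=\left(\frac{bz+1}{b+z}\right)^d$ from Proposition 4 (with $b=e^{2\beta}$) into your message-passing map $u\mapsto d\,\operatorname{arctanh}(\tanh\beta\,\tanh u)$, and Theorem 2's identity $\langle\sigma_0\rangle^+_{n+1}=\frac{x_n-y_n}{x_n+y_n}$ is exactly your $\tanh$ of the root field (mind the off-by-one: the volume of height $n+1$ corresponds to $n$ iterations of the map, a shift you can absorb into the constants). Where you genuinely diverge is in extracting the $n^{-1/2}$ asymptotics. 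The paper proves its Lemma 3 by a two-sided induction: it introduces the threshold function $K^d_{\sqrt{n}}(k)$ (Proposition 5) and verifies by reduction to polynomial inequalities --- e.g.\ the factorization $G_d(z)=(1-z)^5\sum_i\alpha_i z^i$ with strictly positive coefficients --- that the explicit constants $k_2=\sqrt{6}d/\sqrt{d^2-1}$ and $k_1=1-\frac{y_0}{x_0}$ propagate through every iteration step, yielding nonasymptotic bounds valid from $n=1$ with no hidden transient. You instead run the classical neutral-fixed-point analysis: $g'(0)=d\tanh\beta_c=1$, the expansion $g(x)=x-\frac{d^2-1}{3d^2}x^3+O(x^5)$, the substitution $b_k=a_k^{-2}$, and telescoping. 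Both routes are complete; yours is arguably more transparent and delivers sharper information, since it shows $\sqrt{n}\,\langle\sigma_0\rangle^+_n$ actually converges to the explicit limit $\sqrt{3d^2/(2(d^2-1))}$, which equals half the paper's constant $k_2$ and confirms that Lemma 3's upper bound is asymptotically sharp in the ratio coordinates; the paper's induction, in exchange, produces clean explicit constants without any Landau-symbol bookkeeping. The two caveats you flag are real but benign, and your fixes are the right ones: the orbit $(a_k)$ is a \emph{single} sequence not depending on $n$ (the height only determines which term is read off), so once you note that $g$ is increasing and $g(x)<x$ for all $x>0$ at criticality (immediate from strict concavity of $\tanh$, which gives $\tanh(x/d)>\frac{1}{d}\tanh x$), the entry time into the interval $(0,x_0]$ is a finite constant $k_0(d)$, the finitely many heights $n\le k_0$ are absorbed into $c$ and $C$, and the $O(x^5)$ and $O(u^2)$ remainders are uniform on $(0,x_0]$, making the telescoping bounds $2c_1\le b_{k+1}-b_k\le C_2$ legitimate; since $a_n\le a_0=d\beta_c$ is bounded, $\tanh(a_n)\asymp a_n$ holds with constants depending only on $d$, completing the argument.
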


Recall that Handa et al. conjectured that the critical 1-arm exponent on $\mathbb{Z}^d$ is equal to 1 in high dimensions. The factor $\frac{1}{2}$ with respect to the exponent on the Cayley tree can be explained as arising from a random embedding of the tree into $\mathbb{Z}^d$ (mind that the expected end-to-end distance of a path with length $n$ randomly embedded into $\mathbb{Z}^d$ is $\sqrt{n}$). Other critical exponents such as $\beta$, $\gamma$ and $\delta$ are defined via cluster sizes, and hence no ``correction" between the lattice and the tree occurs.

To prove Theorem \ref{thm1} we first establish a new recursive relation for the sequence $(\langle\sigma_0\rangle^+_n)_{n\in\mathbb{N}}$ in Section \ref{II} and then analyze this relation in Section \ref{III} - to finally conclude the proof in Section \ref{IV}.

\section{Recursive representation of the thermal spin expectations}\label{II}
First, we would like to express  the expected spin values $\langle\sigma_0\rangle^+_n$ in terms of a recursively defined sequence $(\frac{y_n}{{x_n}})_{n\in\mathbb{N}_0}$. This recursive representation is not only useful for numerical computation, but also more suited for some elementary methods of mathematical analysis than the initial definition \eqref{spont}. The proofs presented in the following section heavily rely upon this fact. To define the sequence $(\frac{y_n}{{x_n}})_{n\in\mathbb{N}_0}$, we introduce the following notation: ~

~\\Let 
\begin{align}\label{xy0}
\begin{aligned}
x_0&=x_0(\beta,d):=(e^{\beta(d+1)}+e^{-\beta(d+1)})^d,\\ y_0&=y_0(\beta,d):=(e^{\beta(d-1)}+e^{-\beta(d-1)})^d.
\end{aligned}
\end{align}
Then,  for all $n\in \mathbb{N}$ define 
\begin{align}\label{xyn}
\begin{aligned}
x_n = x_n(\beta,d):=(e^{\beta}x_{n-1}+e^{-\beta}y_{n-1})^d,\\y_n = y_n(\beta,d):=(e^{\beta}y_{n-1}+e^{-\beta}x_{n-1})^d. 
\end{aligned}
\end{align}

\begin{theo}[Recursive representation]\label{thm2}~\\With notation as in \eqref{xy0} and \eqref{xyn}:\begin{align*}
  \langle\sigma_0\rangle^+_{n+1}=\frac{x_n-y_n}{x_n+y_n}=\frac{2}{\frac{y_n}{x_n}+1}-1
 \end{align*}
for all $\beta>0,~d\geq 2\text{ and }n\in \mathbb{N}_0$.
\end{theo}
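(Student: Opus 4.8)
The plan is to exploit the self-similar structure of the tree together with the fact that, once a spin is fixed at a branching vertex, the Gibbs weight factorizes over the disjoint subtrees hanging below it. First I would write the thermal expectation as a ratio of restricted partition functions: splitting $Z_{n+1}^+=\sum_\sigma e^{-\beta H^+_{n+1}(\sigma)}$ according to the value of $\sigma_0$, set $A:=\sum_{\sigma:\sigma_0=+1}e^{-\beta H^+_{n+1}(\sigma)}$ and $B:=\sum_{\sigma:\sigma_0=-1}e^{-\beta H^+_{n+1}(\sigma)}$, so that by definition \eqref{spont}
\begin{align*}
\langle\sigma_0\rangle^+_{n+1}=\frac{A-B}{A+B}.
\end{align*}
The second equality in the statement is then the purely algebraic identity $\frac{A-B}{A+B}=\frac{2}{B/A+1}-1$, so the whole content of the theorem reduces to showing $A=x_n$ and $B=y_n$.

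To compute $A$ and $B$ I would introduce, for every vertex $v$ having $m$ full generations of descendants inside the volume (together with the plus-boundary spins attached to the bottom generation), the partial partition functions $\phi_m^+$ and $\phi_m^-$ obtained by fixing $\sigma_v=+1$ resp.\ $\sigma_v=-1$, summing the Boltzmann weight over all spins strictly below $v$, and including every edge with both endpoints in the subtree (as well as the boundary edges) but deliberately excluding the edge joining $v$ to its parent. By homogeneity of $\Gamma_r^d$ these quantities depend only on $m$. Conditioning on the spins of the $d$ children of $v$ and using that the subtrees rooted at distinct children are disjoint, the weight factorizes and yields the recursion
\begin{align*}
\phi_m^+=\bigl(e^\beta\phi_{m-1}^+ + e^{-\beta}\phi_{m-1}^-\bigr)^d, \qquad \phi_m^-=\bigl(e^\beta\phi_{m-1}^- + e^{-\beta}\phi_{m-1}^+\bigr)^d,
\end{align*}
which is exactly the recursion \eqref{xyn}. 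The base case $m=0$ is a vertex of $W_{n+1}$ whose only neighbors below it are its $d$ boundary children, all carrying spin $+1$; this contributes $\phi_0^+=e^{\beta d}$ and $\phi_0^-=e^{-\beta d}$, and substituting into the recursion gives $\phi_1^+=x_0$ and $\phi_1^-=y_0$ as in \eqref{xy0}. Hence $\phi_m^+=x_{m-1}$ and $\phi_m^-=y_{m-1}$ for all $m\geq 1$.

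Finally I would assemble the quantities at the root. Since the root $0$ has exactly $d$ children --- this is precisely where the rooted modification $\Gamma_r^d$ pays off, making every vertex branch $d$-fold --- and carries no parent edge, conditioning on $\sigma_0=+1$ and factorizing over its $d$ children (each the top of a subtree with $n$ generations below it) gives $A=(e^\beta\phi_n^+ + e^{-\beta}\phi_n^-)^d=(e^\beta x_{n-1}+e^{-\beta}y_{n-1})^d=x_n$, and likewise $B=y_n$; for $n=0$ the same computation with $\phi_0^\pm$ gives $A=x_0$, $B=y_0$ directly. Combined with the ratio formula above this proves the claim. The only delicate point is the bookkeeping of edges and boundary terms --- in particular consistently excluding each vertex's parent edge in $\phi_m^\pm$ and reinstating it through the factor $e^{\pm\beta}$ when combining children with their parent, and correctly reading off the base case from the $d$ plus-boundary neighbors; the recursion and the final assembly are then immediate.
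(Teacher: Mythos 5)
Your proposal is correct and is essentially the paper's own argument in different clothing: your subtree partition functions $\phi_m^{\pm}$ coincide exactly with the paper's layer functions ($\phi_{m}^{a}=G_{m-1}(a)$, so $\phi_{n+1}^{+}=x_n$ and $\phi_{n+1}^{-}=y_n$), and both proofs sum out the spins generation by generation using the same factorization over the $d$ disjoint children subtrees, differing only in that you condition on $\sigma_0$ and build the recursion bottom-up while the paper carries the full expectation and peels the boundary inward. The bookkeeping you flag (excluding the parent edge, base case $\phi_0^{\pm}=e^{\pm\beta d}$ from the plus boundary) checks out and reproduces \eqref{xy0} and \eqref{xyn} exactly.
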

We formulate this result as a theorem, because it is of great generality and of own independent value. The recursive representation can be used to analyze the spontaneous magnetization not only at criticality, but at any positive temperature. For example, using argumentation similar to the one presented in Section \ref{III}, it can provide an alternative proof of \eqref{betac}.

The recursive relation is tailored for our analysis in Section \ref{III}. However, equivalent versions of such a relation have been used in the literature. 
For example, it was used in Baxter's book\cite{e} in order to derive a number of other critical exponents, and also by Bissacot, Endo and van Enter\cite{f} in order to investigate boundary fields corresponding to compatible measures. 

%

\begin{proof}[Proof of Theorem \ref{thm2}]
We rewrite the definition of $\langle\sigma_0\rangle^+_{n+1}$ (see \eqref{spont}) by representing configurations on the finite volume $V_{n+1}$ as concatenations of configurations on $V_n$ and configurations on $W_{n+1}$. We then proceed to sort the exponential terms accordingly:

\begin{align*}
\langle\sigma_0\rangle^+_{n+1}&=\frac{1}{Z^+_{n+1}}\sum_{\sigma\in \Omega^{V_{n+1}}}\sigma_0   \psi^+_{n+1}(\sigma)
=\frac{1}{Z^+_{n+1}}\sum_{\sigma\in \Omega^{V_n}}\sum_{\omega\in \Omega^{W_{n+1}}}\sigma_0   \psi^+_{n+1}(\sigma\vee\omega)
\\
&= \frac{1}{Z^+_{n+1}} \sum_{\sigma\in \Omega^{V_n}}\sigma_0e^{\beta\sum_{\{x,y\}\in L_n}\sigma_x\sigma_y}\sum_{\omega\in \Omega^{W_{n+1}}}e^{\beta\sum_{x\in W_n}\sum_{y\in S(x)}\sigma_x\omega_y}e^{\beta\sum_{x\in W_{n+1}}d\omega_x}
\\
&=\frac{1}{Z^+_{n+1}} \sum_{\sigma\in \Omega^{V_n}}\sigma_0e^{\beta\sum_{\{x,y\}\in L_n}\sigma_x\sigma_y}\sum_{\omega\in \Omega^{W_{n+1}}}\prod_{x\in W_n}\prod_{y\in S(x)}e^{\beta(\sigma_x\omega_y+d\omega_y)}
\\
&\stackrel{(\star)}{=}\frac{1}{Z^+_{n+1}} \sum_{\sigma\in \Omega^{V_n}}\sigma_0e^{\beta\sum_{\{x,y\}\in L_n}\sigma_x\sigma_y}\prod_{x\in W_n}\left(\sum_{u\in\{-1,+1\}}e^{\beta(\sigma_xu+du)}\right)^d.
\end{align*}

Since the configurations on $W_{n+1}$ are just all possible combinations of spin values $+1$ and $-1$ on $W_{n+1}=\bigcup_{x\in W_n}S(x)$, where the union is disjoint and every set of successors S(x) consists of $d$ elements, $(\star)$ is just a combinatorical consideration (the same argument is used to derive the recursion for ``boundary fields" by Bissacot, Endo and van Enter\cite{f}).

Now we define for $a\in\{-1,+1\}$:
\begin{align*}
G(a)=G(a,\beta,d):=\left(\sum_{u\in\{-1,+1\}}e^{\beta(au+du)}\right)^d,
\end{align*}
thus $G(+1)=x_0$ and $G(-1)=y_0$ (see \eqref{xy0}).

Then,
\begin{align*}
\langle\sigma_0\rangle^+_{n+1}&=\frac{1}{Z^+_{n+1}} \sum_{\sigma\in \Omega^{V_n}}\sigma_0e^{\beta\sum_{\{x,y\}\in L_n}\sigma_x\sigma_y}\prod_{x\in W_n}G(\sigma_x)\\
&=\frac{1}{Z^+_{n+1}} \sum_{\sigma\in \Omega^{V_{n-1}}}\sigma_0e^{\beta\sum_{\{x,y\}\in L_{n-1}}\sigma_x\sigma_y}\sum_{\omega\in W_n}\left(e^{\beta\sum_{x\in W_{n-1}}\sum_{y\in S(x)}\sigma_x\omega_y}\prod_{x\in W_{n-1}}\prod_{y\in S(x)}G(\omega_y)\right)\\
&=\frac{1}{Z^+_{n+1}} \sum_{\sigma\in \Omega^{V_{n-1}}}\sigma_0e^{\beta\sum_{\{x,y\}\in L_{n-1}}\sigma_x\sigma_y}\prod_{x\in W_{n-1}}\left(\sum_{u\in \{-1,+1\}}e^{\beta\sigma_x u}G(u)\right)^d,
\end{align*}
where the last equality holds by the same combinatorical argument as $(\star)$.

We continue this procedure inductively. We define for $a\in\{-1,+1\}$:
\begin{align*}
G_0(a)&:=G(a)\\
\text{and for } 0<k\leq n:~
G_k(a)&:=\left(\sum_{u\in \{-1,+1\}}e^{\beta a u}G_{k-1}(u)\right)^d,
\end{align*}
thus $G_k(+1)= x_k$ and $G_k(-1)=y_k$ (see \eqref{xyn}).

With that,\newline \[\langle\sigma_0\rangle^+_{n+1}=\frac{1}{Z^+_{n+1}} \sum_{\sigma\in \Omega^{V_{n-1}}}\sigma_0e^{\beta\sum_{\{x,y\}\in L_{n-1}}\sigma_x\sigma_y}\prod_{x\in W_{n-1}}G_1(\sigma_x)\]\newline and then, by induction, for $ 0<k\leq n$ \newline \[\langle\sigma_0\rangle^+_{n+1}=\frac{1}{Z^+_{n+1}} \sum_{\sigma\in \Omega^{V_{n-k}}}\sigma_0e^{\beta\sum_{\{x,y\}\in L_{n-k}}\sigma_x\sigma_y}\prod_{x\in W_{n-k}}G_k(\sigma_x).\]

In particular,
\begin{align}\label{rooted}
\langle\sigma_0\rangle^+_{n+1}&=\frac{1}{Z^+_{n+1}} \sum_{\sigma\in \Omega^{V_1}}\sigma_0e^{\beta\sum_{\{x,y\}\in L_1}\sigma_x\sigma_y}\prod_{x\in W_1}G_{n-1}(\sigma_x)\nonumber \\
&=\frac{1}{Z^+_{n+1}} \sum_{\sigma_0 \in \{-1,+1\}}\sigma_0 \sum_{\omega \in \Omega^{W_1}}e^{\beta\sum_{x\in W_1}\sigma_0\omega_x}\prod_{x\in W_1}G_{n-1}(\sigma_x) \nonumber\\
&=\frac{1}{Z^+_{n+1}} \sum_{\sigma_0 \in \{-1,+1\}}\sigma_0 \sum_{\omega \in \Omega^{W_1}}\prod_{y\in S(0)}e^{\beta\sigma_0\omega_y}\prod_{y\in S(0)}G_{n-1}(\sigma_y)\nonumber \\
&=\frac{1}{Z^+_{n+1}} \sum_{\sigma_0 \in \{-1,+1\}}\sigma_0\left(\sum_{u\in \{-1,+1\}}e^{\beta\sigma_0 u}G_{n-1}(u)\right)^d \\
\nonumber &=\frac{G_n(+1)-G_n(-1)}{Z^+_{n+1}}=\frac{G_n(+1)-G_n(-1)}{G_n(+1)+G_n(-1)}=\frac{x_n-y_n}{x_n+y_n}.
\end{align}
The second equality in the claim of the theorem follows immediately by a simple transformation of the last fraction.
\end{proof}~

\begin{rem}
Such a representation of $\langle\sigma_0\rangle^+_n$ seems quite natural: The reader will notice that $\frac{x_n}{x_n+y_n}$ can be interpreted as the probability of the root taking the spin value $+1$, $\frac{y_n}{x_n+y_n}$ as the probability of $-1$.
\end{rem}

\begin{rem}
Naturally, we can use the same procedure to find a recursive representation for regular Cayley trees.  The proof would be exactly the same - up to the last step of the induction, where instead of \eqref{rooted} we would get
\begin{align*}
\langle\sigma_0\rangle^+_{n+1}=\frac{1}{Z^+_{n+1}} \sum_{\sigma_0 \in \{-1,+1\}}\sigma_0\left(\sum_{u\in \{-1,+1\}}e^{\beta\sigma_0 u}G_{n-1}(u)\right)^{d+1}=\frac{x_n\sqrt[d]{x_n}-y_n\sqrt[d]{y_n}}{x_n\sqrt[d]{x_n}+y_n\sqrt[d]{y_n}},
\end{align*}
since in that case $S(0)$ consists of $d+1$ elements. We see  that the recursive representation is ``nicer" for the rooted Cayley tree, which is why we consider this modification of the underlying graph in the first place.
\end{rem}

\begin{rem}As an immediate consequence of Theorem \ref{thm2}: \begin{align}\label{thd}
\lim_{n\to\infty}\langle\sigma_0\rangle^+_n=0\text{ if and only if }\lim_{n\to\infty}\frac{y_n}{x_n}=1.
\end{align}
\end{rem}

\begin{rem}\label{rem3}
Also, notice the following fact, which will be used implicitely throughout the paper: For all $\beta>0,~d\geq 2$ and $n\in \mathbb{N}_0$
\begin{align*}
\frac{y_n}{x_n}<1.
\end{align*}
This inequality holds by Theorem \ref{thm2}, since $\langle\sigma_0\rangle^+_n$ is always positive. Intuitively, the latter should be clear: The influence of the positive boundary certainly remains on any finite volume of the lattice, no matter how weak the interactions between the neighbors are or how big the distance between the root and the boundary generation gets (though it might vanish after taking the thermodynamic limit). Alternatively, the claim of Remark \ref{rem3} can be proven via simple induction using just the definition of the recursive sequence $(\frac{y_n}{{x_n}})_{n\in\mathbb{N}_0}$.
\end{rem}

\section{Analyzing the recursion at criticality}\label{III}
Now, we consider Cayley trees with arbitrary structure (i.e.\ $d\geq 2$), but focus on the critical case $\beta=\beta_c$, where $\beta_c$ is defined as in \eqref{defbc}. Recall, that on the $(d+1)$-regular Cayley tree $\Gamma^d$ the critical inverse temperature is known to be $\tanh^{-1}({\frac{1}{d}})$ (see \eqref{betac} and Ref.\ \onlinecite{9}). Notice that considering the rooted Cayley tree as the underlying graph does not change this critical value.

 With the recursive representation from Theorem \ref{thm2} at hand, we can determine the critical 1-arm exponent for our model by analyzing the asymptotic behavior of $(\frac{y_n}{{x_n}})_{n\in\mathbb{N}_0}$ at criticality (i.e.\ for $\beta=\tanh^{-1}(\frac{1}{d})$). That analysis is provided by the following result, which is the main ingredient to the proof of Theorem \ref{thm1}:
\begin{lemma}[]\label{lemma}~\\
$\text{For any }d\geq 2,~\beta=\beta_c=\tanh^{-1}(\frac{1}{d})\text{ and for all }n\in\mathbb{N}$\[\quad\quad 1-\left(1-\frac{y_0}{x_0}\right)\frac{1}{\sqrt{n}}\geq\frac{y_{n-1}}{x_{n-1}}\geq 1-\left(\frac{\sqrt{6}d}{\sqrt{d^2-1}}\right)\frac{1}{\sqrt{n}}.\]
\end{lemma}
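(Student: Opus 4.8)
The plan is to analyze the recursion for the ratio $r_n := y_n/x_n$ directly. From the definitions in \eqref{xyn}, dividing the recursion for $y_n$ by that for $x_n$ gives a clean one-dimensional map
\[
r_n = \left(\frac{e^{\beta} r_{n-1} + e^{-\beta}}{e^{\beta} + e^{-\beta} r_{n-1}}\right)^{d}.
\]
At criticality, with $e^{2\beta_c} = \tanh^{-1}(1/d)$ meaning $\tanh\beta_c = 1/d$, i.e.\ $e^{-2\beta_c} = (d-1)/(d+1)$, this map should have $r = 1$ as a fixed point that is exactly \emph{neutral} (derivative equal to $1$). That tangency is the whole reason the convergence is only polynomial rather than exponential, and it is what produces the exponent $1/2$. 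So the first concrete step is to write $f(r)$ for the right-hand side above, verify $f(1) = 1$ and $f'(1) = 1$ at $\beta = \beta_c$, and compute the next order: set $r_n = 1 - \varepsilon_n$ with $\varepsilon_n \to 0^{+}$ (we know $r_n < 1$ by Remark \ref{rem3}, so $\varepsilon_n > 0$), and Taylor-expand to obtain something of the form $\varepsilon_n = \varepsilon_{n-1} - a\,\varepsilon_{n-1}^{2} + O(\varepsilon_{n-1}^{3})$ for an explicit constant $a > 0$.

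**Second, I would** convert this quadratic recursion into the claimed $n^{-1/2}$ rate. The standard device is to look at $u_n := 1/\varepsilon_n$. From $\varepsilon_n \approx \varepsilon_{n-1} - a\varepsilon_{n-1}^2$ one gets $u_n - u_{n-1} \to a$, so $u_n \sim a n$ and hence $\varepsilon_n \sim 1/(an)$, giving $r_{n-1} = 1 - \varepsilon_{n-1} \approx 1 - 1/(a n)$. Matching the target bounds, the constant $a$ should work out so that $1/a$ equals the prefactor $\sqrt{6}d/\sqrt{d^2-1}$ squared-and-rooted appropriately; computing $f''(1)$ explicitly is the place where the specific value $\sqrt{6}d/\sqrt{d^2-1}$ must emerge, so I would pin down $a = \tfrac{d^2-1}{6d^2}\cdot(\dots)$ by a careful expansion. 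Because the statement asks for two-sided \emph{non-asymptotic} bounds valid for \emph{all} $n$, I would not argue via limits but instead establish the two inequalities by induction on $n$, treating the upper and lower bounds separately.

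**For the rigorous induction,** the cleaner route is again through $u_n = x_n/(x_n - y_n)$ or $1/(1-r_n)$: show that $u_n - u_{n-1}$ is \emph{bounded above and below} by explicit constants for every $n$, \emph{not just in the limit}. Concretely, I would prove an inequality of the shape $c_1 \le u_n - u_{n-1} \le c_2$ uniformly in $n$ by exploiting monotonicity of $r_n$ (the sequence increases to $1$, which itself needs a short monotonicity lemma) together with convexity/sign control on the error term in the Taylor expansion of $f$. Telescoping then yields $u_0 + c_1 n \le u_n \le u_0 + c_2 n$, which inverts to the two-sided bound on $r_{n-1}$. The lower bound on $r_{n-1}$ (equivalently the upper bound $c_2$ on the increments) is what forces the constant $\sqrt{6}d/\sqrt{d^2-1}$, while the upper bound $1 - (1-y_0/x_0)/\sqrt{n}$ looks like it comes from the crudest possible increment estimate $u_n - u_{n-1} \ge 1$, telescoped from the initial value $u_0 = 1/(1 - y_0/x_0)$.

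**The main obstacle** I anticipate is controlling the \emph{higher-order} error terms uniformly rather than asymptotically: the passage from $\varepsilon_n = \varepsilon_{n-1} - a\varepsilon_{n-1}^2 + O(\varepsilon_{n-1}^3)$ to clean inductive increment bounds requires showing the cubic-and-higher remainder never overwhelms the quadratic term, for \emph{all} $n$ including small $n$ where $\varepsilon_n$ is not yet small. I expect this to hinge on a sharp, global (not just local near $r=1$) estimate on $f(r) - r$ on the whole interval $r \in [y_0/x_0, 1)$, probably via writing $f(r)-r$ in closed form and factoring out $(1-r)^2$ to expose a bounded, sign-definite remaining factor. Getting the \emph{explicit} constant $\sqrt{6}$ right—as opposed to some unspecified constant—will demand that the $f''(1)$ computation and the remainder bound be done with genuine care rather than soft asymptotics.
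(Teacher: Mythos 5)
There is a genuine gap at the very center of your plan: the quadratic normal form you assume does not exist at criticality. With $b=e^{2\beta_c}=\frac{d+1}{d-1}$ and $g(r)=\bigl(\frac{br+1}{b+r}\bigr)^d$ (your map $f$ equals the paper's $g$), a direct computation gives $g'(1)=d\,\frac{b-1}{b+1}=1$, as you say, but also $g''(1)=d(d-1)\bigl(\frac{1}{d}\bigr)^2+d\cdot\bigl(-\frac{d-1}{d^2}\bigr)=0$: the second-order coefficient vanishes identically at $\beta=\beta_c$. So your expansion $\varepsilon_n=\varepsilon_{n-1}-a\varepsilon_{n-1}^2+O(\varepsilon_{n-1}^3)$ holds only with $a=0$, the increments $u_n-u_{n-1}$ of $u_n=1/\varepsilon_n$ tend to $0$ rather than to a positive constant, and the claimed two-sided bounds $c_1\le u_n-u_{n-1}\le c_2$ with $c_1>0$ are false. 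Your own scheme should have flagged this: a genuine quadratic tangency would force $\varepsilon_n\sim 1/(an)$, i.e.\ decay of order $n^{-1}$, contradicting the $n^{-1/2}$ rate the lemma asserts. The correct local behavior is cubic: $g'''(1)=-\frac{d^2-1}{2d^2}$, hence
\begin{align*}
\varepsilon_n=\varepsilon_{n-1}-\frac{d^2-1}{12d^2}\,\varepsilon_{n-1}^3+O\!\left(\varepsilon_{n-1}^4\right),
\end{align*}
and the right Lyapunov-type quantity is $u_n=1/\varepsilon_n^2$, whose increments tend to $\frac{d^2-1}{6d^2}$, yielding $\varepsilon_n\sim\frac{\sqrt{6}\,d}{\sqrt{d^2-1}}\,n^{-1/2}$ --- which is exactly where the lemma's constant comes from. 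Correspondingly, in your final paragraph the factor to extract from $g(r)-r$ is $(1-r)^3$, not $(1-r)^2$.

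Once repaired, your strategy (non-asymptotic increment bounds for $1/\varepsilon_n^2$, established by a global sign-definite factorization and induction) is plausible and is close in spirit to what the paper actually does, though the paper organizes the induction differently: instead of controlling increments of $1/\varepsilon_n^2$, it propagates bounds of the exact shape $1-k/\sqrt{n}$ through one application of $g$ via the explicit threshold $K^d_{\sqrt{n}}(k)$, defined in \eqref{kcrk} as the unique potentially positive root of the polynomial in \eqref{nst} (Proposition \ref{prop5}), and then verifies $K^d_{\sqrt{n}}\bigl(\tfrac{\sqrt{6}d}{\sqrt{d^2-1}}\bigr)\le 1-\tfrac{\sqrt{6}d}{\sqrt{d^2-1}}\tfrac{1}{\sqrt{n-1}}$ and $K^d_{\sqrt{n}}(k)\ge 1-\tfrac{k}{\sqrt{n-1}}$ for $k\le 1$ (Propositions \ref{prop6} and \ref{prop7}); the sign-definiteness you anticipate appears there as the factorization $G_d(z)=(1-z)^5\sum_{i=0}^{2d-3}\alpha_i z^i$ with strictly positive integer coefficients. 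This exact-algebra route sidesteps the uniform control of Taylor remainders for small $n$ that you correctly identify as the main obstacle; your route, by contrast, would make the emergence of the constant $\frac{\sqrt{6}d}{\sqrt{d^2-1}}$ more transparent, but only after the second- versus third-order correction above, without which the proposal as written cannot produce the exponent $\frac{1}{2}$.
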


\subsection{Auxiliary results}\label{IIIa}
To prove Lemma \ref{lemma}, we want to establish four auxiliary results, which we call propositions. The first one is not only useful for our proof, but also provides a method for efficient numerical computation of $(\frac{y_n}{{x_n}})_{n\in\mathbb{N}_0}$.

Notice that the sequences $(x_n)_{n\in\mathbb{N}}$ and $(y_n)_{n\in\mathbb{N}}$ grow very fast in $n$, especially for $d$ large, since $d$ appears in every step of the recursion \eqref{xyn} as an exponent. The straightforward approach - computing  $x_n$ and $y_n$ first, then determining their ratio - is thus highly inefficient and practically impossible for high values of $n$. Luckily, we can obtain the first $n$ elements of $(\frac{y_n}{{x_n}})_{n\in\mathbb{N}}$ directly from the starting point $\frac{y_0}{{x_0}}$ - without computing $x_n$ and $y_n$ - using an iterated function, which we introduce in the following:

~\\Let
\begin{align}\label{g}
g(x):=\left(x+\frac{1-x^2}{b+x}\right)^d,
\end{align}
where $b:=e^{2\beta}$. Furthermore, for a function $f$ let $f^n$ be the function obtained by composing $f$ with itself $n$ times.

\begin{prop}[Iterated function representation]~\\
With notation as in \eqref{g}, for all $\beta>0,~d\geq 2$ and $n\in \mathbb{N}_0:$
\begin{align*}
\frac{y_n}{{x_n}}=g^n\left(\frac{y_0}{x_0}\right).
\end{align*}
\end{prop}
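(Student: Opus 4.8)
The plan is to prove the identity $\frac{y_n}{x_n}=g^n\!\left(\frac{y_0}{x_0}\right)$ by induction on $n$, where the heart of the matter is a single algebraic computation showing that $g$ maps $\frac{y_{n-1}}{x_{n-1}}$ to $\frac{y_n}{x_n}$ in one step. The base case $n=0$ is immediate since $g^0$ is the identity. For the inductive step it suffices to prove the one-step relation
\begin{align*}
\frac{y_n}{x_n}=g\!\left(\frac{y_{n-1}}{x_{n-1}}\right)\quad\text{for all }n\in\mathbb{N},
\end{align*}
and then compose with the inductive hypothesis $\frac{y_{n-1}}{x_{n-1}}=g^{n-1}\!\left(\frac{y_0}{x_0}\right)$ to conclude $\frac{y_n}{x_n}=g\!\left(g^{n-1}\!\left(\frac{y_0}{x_0}\right)\right)=g^n\!\left(\frac{y_0}{x_0}\right)$.

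The key step, then, is verifying the one-step relation. First I would start from the recursion \eqref{xyn} and write the ratio directly as
\begin{align*}
\frac{y_n}{x_n}=\left(\frac{e^{\beta}y_{n-1}+e^{-\beta}x_{n-1}}{e^{\beta}x_{n-1}+e^{-\beta}y_{n-1}}\right)^{d}.
\end{align*}
The natural move is to factor $x_{n-1}$ out of numerator and denominator so that everything is expressed through the single variable $r:=\frac{y_{n-1}}{x_{n-1}}$, giving $\frac{y_n}{x_n}=\left(\frac{e^{\beta}r+e^{-\beta}}{e^{\beta}+e^{-\beta}r}\right)^{d}$. It then remains to show that the base of this $d$-th power equals the base $r+\frac{1-r^2}{b+r}$ appearing in the definition \eqref{g} of $g$, with $b=e^{2\beta}$. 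I would divide numerator and denominator by $e^{-\beta}$ to introduce $b=e^{2\beta}$ and then check the purely algebraic identity
\begin{align*}
\frac{br+1}{b+r}=r+\frac{1-r^2}{b+r},
\end{align*}
which follows by putting the right-hand side over the common denominator $b+r$ and simplifying the numerator $r(b+r)+(1-r^2)=br+1$. Raising both sides to the $d$-th power completes the one-step relation.

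I do not expect a genuine obstacle here: the statement is an exact algebraic identity, so there is no estimation or limiting argument involved. The only point requiring a little care is the bookkeeping in the rewriting of the ratio — in particular keeping track of the factors of $e^{\pm\beta}$ correctly when introducing $b=e^{2\beta}$, and noting that the denominator $b+r$ is strictly positive (indeed $b>1$ and $r\ge 0$) so that $g$ is well-defined at every iterate. Since Remark \ref{rem3} guarantees $\frac{y_{n-1}}{x_{n-1}}<1$, all iterates stay in the range where $g$ is defined and the composition $g^n$ makes sense, so the induction is clean throughout.
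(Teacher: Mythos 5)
Your proof is correct and follows essentially the same route as the paper: both reduce the claim to the one-step identity $\frac{y_n}{x_n}=g\bigl(\frac{y_{n-1}}{x_{n-1}}\bigr)$ by factoring $x_{n-1}$ out of the recursion \eqref{xyn}, the only cosmetic difference being that you pass through the form $\frac{br+1}{b+r}$ (an identity the paper itself uses later, in the proof of Proposition \ref{prop5}) while the paper manipulates directly into the form $r+\frac{1-r^2}{b+r}$, and you spell out the induction that the paper leaves implicit. No gaps; the argument is complete.
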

\begin{proof} Using some basic transformations, we get:
\begin{align*}
\frac{y_n}{x_n}&=\left(\frac{e^{\beta}y_{n-1}+e^{-\beta}x_{n-1}}{e^{\beta}x_{n-1}+e^{-\beta}y_{n-1}}\right)^d=\left(\frac{y_{n-1}}{x_{n-1}}+\frac{x_{n-1}^2-y_{n-1}^2}{x_{n-1}(bx_{n-1}+y_{n-1})}\right)^d\\&=
\left(\frac{y_{n-1}}{x_{n-1}}+\frac{1-\left(\frac{y_{n-1}}{x_{n-1}}\right)^2}{b+\frac{y_{n-1}}{x_{n-1}}}\right)^d=g\left(\frac{y_{n-1}}{x_{n-1}}\right).
\end{align*}
\end{proof}

The following three propositions are rather technical results, which are stated in a slightly more general setting than the lemma they are used to prove.

First, we want to establish a sufficient condition on $\frac{y_{n-1}}{x_{n-1}}$ for $\frac{y_n}{{x_n}}$ to be bounded by a term of the form $1-\frac{k}{\sqrt{m}}$ (to use it in an inductive argument in the proof of Lemma 3). Such a condition should be established for all $n,m \in\mathbb{N}$ and some suitable set of positive values $k$. It is provided by the following result.

\begin{prop}\label{prop5}~\\
Let $d\geq 2$, $n\in\mathbb{N}$ and $k>0$, such that $k^2\leq n$. Further, let $0<y<x$. Assume:
\begin{align*}
\frac{y}{x}\stackrel{(\geq)}{\leq} K^d_{\sqrt{n}}(k),
\end{align*}  where 
\begin{align}\label{kcrk}
K^d_{\sqrt{n}}(k):=\frac{-(d+1)\left(1-\frac{k}{\sqrt{n}}\right)^{\frac{1}{d}}+(d-1)}{\left(d-1\right)\left(1-\frac{k}{\sqrt{n}}\right)^{\frac{1}{d}}-(d+1)}.
\end{align}
Then - using notation \eqref{g} - for $\beta=\beta_c=tanh^{-1}\left(\frac{1}{d}\right)$, i.e.\ $b=e^{2\beta_c}=\frac{d+1}{d-1}$, the following inequality holds:
\begin{align}\label{elv}
g\left(\frac{y}{x}\right)\stackrel{(\geq)}{\leq} 1-\frac{k}{\sqrt{n}}.
\end{align}
\end{prop}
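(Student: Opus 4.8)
The plan is to recognize $K^d_{\sqrt{n}}(k)$ as precisely the preimage of $1-\frac{k}{\sqrt{n}}$ under $g$, so that the proposition reduces to a monotonicity statement about $g$.

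\emph{First}, I would simplify the iterated map. A direct computation gives
\begin{align*}
r + \frac{1-r^2}{b+r} = \frac{r(b+r) + 1 - r^2}{b+r} = \frac{br+1}{b+r},
\end{align*}
so that, writing $h(r):=\frac{br+1}{b+r}$, the map from \eqref{g} takes the clean form $g(r)=h(r)^d$.

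\emph{Second}, I would solve $g(r)=1-\frac{k}{\sqrt{n}}$ explicitly. Taking $d$-th roots is legitimate since the hypothesis $k^2\le n$ guarantees $1-\frac{k}{\sqrt{n}}\ge 0$; setting $\alpha:=\left(1-\frac{k}{\sqrt{n}}\right)^{1/d}$ this reduces to the linear equation $\frac{br+1}{b+r}=\alpha$, whose unique solution is $r=\frac{b\alpha-1}{b-\alpha}$. Inserting the critical value $b=\frac{d+1}{d-1}$ and clearing the factor $d-1$ from numerator and denominator yields exactly the expression $K^d_{\sqrt{n}}(k)$ of \eqref{kcrk}; that is, $g\bigl(K^d_{\sqrt{n}}(k)\bigr)=1-\frac{k}{\sqrt{n}}$. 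This matching identity is the algebraic heart of the proposition.

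\emph{Third}, I would establish strict monotonicity. Differentiating gives $h'(r)=\frac{b^2-1}{(b+r)^2}>0$ for all $r>-b$, since $b=\frac{d+1}{d-1}>1$ when $d\ge 2$; hence $h$ is strictly increasing. On the subdomain $r\ge -\tfrac{1}{b}$ one has $h(r)\ge 0$, so there $g=h^d$ is strictly increasing as well (for every integer $d\ge 2$, irrespective of parity). Both $\frac{y}{x}\in(0,1)$ and $K^d_{\sqrt{n}}(k)$ lie in this subdomain — the latter because $h\bigl(K^d_{\sqrt{n}}(k)\bigr)=\alpha\ge 0$ by construction. Monotonicity then turns the hypothesis $\frac{y}{x}\le K^d_{\sqrt{n}}(k)$ into $g\bigl(\tfrac{y}{x}\bigr)\le g\bigl(K^d_{\sqrt{n}}(k)\bigr)=1-\frac{k}{\sqrt{n}}$, which is \eqref{elv}; the reversed inequality follows verbatim by replacing $\le$ with $\ge$ throughout, so both cases are dispatched at once.

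All computations here are elementary; the one point that genuinely requires care — and the main thing to verify — is the matching identity of the second step, namely that the algebraically natural inverse $\frac{b\alpha-1}{b-\alpha}$ coincides with the prescribed form \eqref{kcrk} after substituting $b=\frac{d+1}{d-1}$. A secondary bookkeeping point is to confirm that the relevant denominators never vanish and that $K^d_{\sqrt{n}}(k)$ stays in the region $r\ge-\tfrac1b$ where $g$ is monotone, so that the comparison argument is valid regardless of the sign of $K^d_{\sqrt{n}}(k)$.
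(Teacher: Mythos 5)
Your proposal is correct, and it takes a genuinely cleaner route than the paper while resting on the same algebraic core. Both arguments begin with the identity $g(z)=\bigl(\frac{bz+1}{b+z}\bigr)^d$, and both turn on the fact that $K^d_{\sqrt{n}}(k)$ is exactly the point where $g$ attains the value $1-\frac{k}{\sqrt{n}}$ at $b=\frac{d+1}{d-1}$; your inversion $r=\frac{b\alpha-1}{b-\alpha}$ with $\alpha=\bigl(1-\frac{k}{\sqrt{n}}\bigr)^{1/d}$ does reproduce \eqref{kcrk} after multiplying numerator and denominator by $d-1$ (and then by $-1$), so the matching identity you flag as the heart of the argument checks out. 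Where you diverge: the paper recasts \eqref{elv} as the polynomial inequality $f(z)\stackrel{(\geq)}{\leq}0$ of \eqref{nst} and argues by locating zeros — asserting that $K^d_{\sqrt{n}}(k)$ is the only potentially positive zero of $f$ — together with the endpoint signs $f(0)<0$ and $f(1)>0$; the claim that $f$ has no further zero in the relevant range is stated rather than proved, and the $f(0)<0$ step carries an implicit assumption $K^d_{\sqrt{n}}(k)>0$. You instead prove strict monotonicity of $g=h^d$ on $[-\frac{1}{b},\infty)$ from $h'(r)=\frac{b^2-1}{(b+r)^2}>0$ and $h\geq 0$ there, and conclude by direct comparison. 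This buys three things: monotonicity (hence injectivity) of $g$ immediately supplies the zero-counting fact the paper leaves unjustified; both directions of the inequality are dispatched by one argument rather than two separate bullet points; and your verification that $K^d_{\sqrt{n}}(k)\geq-\frac{1}{b}$ (from $h(K^d_{\sqrt{n}}(k))=\alpha\geq 0$ and $K^d_{\sqrt{n}}(k)>-b$, since $b^2>1$) handles the case $K^d_{\sqrt{n}}(k)\leq 0$ uniformly, where the paper's first bullet would be vacuous and its sign computation would need a caveat. The paper's formulation, for its part, keeps everything inside $(0,1)$ and avoids any discussion of the Möbius map's domain, but at the cost of the unproved zero-location claim; on balance your version is the more complete proof.
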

\begin{proof}
First, notice that 
\begin{align*}
g(z):=\left(z+\frac{1-z^2}{b+z}\right)^d=\frac{\left( bz+1\right)^d}{\left( b+z\right)^d}.
\end{align*}

Using this simple identity, for $z=\frac{y}{x}\in\left(0,1\right)$ and an arbitrary (inverse) temperature $\beta>0$, \eqref{elv} is equivalent to
\begin{align*}
&\frac{\left( bz+1\right)^d}{\left( b+z\right)^d}\stackrel{(\geq)}{\leq}1-\frac{k}{\sqrt{n}}.
\end{align*}

Again, this holds if and only if
\begin{align*}
\left( bz+1\right)^d-\left( b+z\right)^d+\frac{k}{\sqrt{n}}\left( b+z\right)^d\stackrel{(\geq)}{\leq}0.
\end{align*}

At criticality, substituting $b=b_c=\frac{d+1}{d-1}$ (notice: $\tanh^{-1}(\frac{1}{d})=\log\left(\sqrt{\frac{d+1}{d-1}}\right)$  for $d>1$) yields the equivalence of \eqref{elv} and
\begin{align}\label{nst}
\nonumber&f(z):=\left[(d+1)z+(d-1)\right]^d-\left[(d-1)z+(d+1)\right]^d\\&+\frac{k}{\sqrt{n}}\left[(d-1)z+(d+1)\right]^d\stackrel{(\geq)}{\leq}0.
\end{align}

The claim of the proposition follows by the fact that $K^d_{\sqrt{n}}(k)$ is the only potentially positive zero (in the sense that any other zero is non-positive) of $f(z)$, as defined in $\eqref{nst}$:

\item[$\bullet$]\quad If $K^d_{\sqrt{n}}(k)\geq z>0$, then $f(z)\leq 0$, since $f(0)=(d-1)^d-(d+1)^d+\frac{k}{\sqrt{n}}(d+1)^d<0$ (notice the additional implicite assumption on $n$ and $k$ implied by the condition $K^d_{\sqrt{n}}(k)>0$) and since there is no positive zero of $f(z)$ smaller than $K^d_{\sqrt{n}}(k)$.

\item[$\bullet$]\quad If $K^d_{\sqrt{n}}(k)\leq z<1$, then $f(z)\geq 0$, since $f(1)=\frac{k}{\sqrt{n}}2^dd^d>0$ and since there is no positive zero of $f(z)$ larger than $K^d_{\sqrt{n}}(k)$.
\end{proof}

We now want to find upper and lower bounds of the form $1-\frac{k}{\sqrt{n-1}}$ for the auxiliary sequence $K^d_{\sqrt{n}}(k)$ and suitable values of $k=k(d)$. We start with an upper bound.
\begin{prop}\label{prop6}~\\
Let $d\geq 2$ and $n\in\mathbb{N}$, such that $n\geq \frac{6d^2}{d^2-1}$. Then, with notation as in $\eqref{kcrk}$:
\begin{align}\label{11}
K^d_{\sqrt{n}}\left(\frac{\sqrt{6}d}{\sqrt{d^2-1}}\right)\leq 1-\left(\frac{\sqrt{6}d}{\sqrt{d^2-1}}\right)\frac{1}{\sqrt{n-1}}.
\end{align}
\end{prop}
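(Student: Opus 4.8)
The plan is to strip off the $d$-th root in the definition \eqref{kcrk} of $K^d_{\sqrt n}(k)$ and reduce the claim to a single scalar inequality. Writing $t := (1-\tfrac{k}{\sqrt n})^{1/d}\in(0,1)$ for $k=\tfrac{\sqrt 6 d}{\sqrt{d^2-1}}$, a direct computation gives the identity
\[
K^d_{\sqrt n}(k)-1=\frac{2d(1-t)}{(d-1)t-(d+1)},
\]
whose denominator is strictly negative (indeed at most $-2$, since $t<1$). Clearing this negative factor and substituting $u:=1-t$, the bound $K^d_{\sqrt n}(k)\le 1-\tfrac{k}{\sqrt{n-1}}$ becomes equivalent to $2d\,u\ge \tfrac{k}{\sqrt{n-1}}\bigl(2+(d-1)u\bigr)$. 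I would first record (a one-line estimate after squaring) that the coefficient $2d-\tfrac{k(d-1)}{\sqrt{n-1}}$ of $u$ is positive for $n\ge k^2=\tfrac{6d^2}{d^2-1}$, so that this rearranges into the equivalent form
\begin{equation}
1-\Bigl(1-\tfrac{k}{\sqrt n}\Bigr)^{1/d}=u\ \ge\ \frac{2k}{2d\sqrt{n-1}-k(d-1)}.\tag{$\ast$}
\end{equation}

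Before attacking $(\ast)$ it is worth seeing why this particular $k$ appears and why the inequality is delicate. At criticality ($b=\tfrac{d+1}{d-1}$) the map $g$ from \eqref{g} fixes $1$ \emph{neutrally}: one checks $g(1)=1$, $g'(1)=1$, and moreover the second-order term vanishes, so that
\[
g(1-\xi)=1-\xi+\frac{d^2-1}{12d^2}\,\xi^3+O(\xi^4).
\]
A map with this cubic tangency has orbits decaying like $\xi_m\sim(2c_3 m)^{-1/2}$ with $c_3=\tfrac{d^2-1}{12d^2}$, and $(2c_3)^{-1/2}$ is exactly $k$. Since Proposition \ref{prop5} identifies $K^d_{\sqrt n}(k)$ as the point that $g$ maps onto $1-\tfrac{k}{\sqrt n}$, the present proposition is precisely the one-step monotonicity statement $g\bigl(1-\tfrac{k}{\sqrt{n-1}}\bigr)\ge 1-\tfrac{k}{\sqrt n}$ that propagates the bound $1-\tfrac{k}{\sqrt m}$ across one iteration. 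The vanishing of the first- and second-order terms forces this to be a genuinely third-order comparison, which is the root of the difficulty.

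To prove $(\ast)$ rigorously I would eliminate the $d$-th root rather than Taylor-expand it. The inequality $t\le 1-w$ with $w:=\tfrac{2k}{2d\sqrt{n-1}-k(d-1)}\in(0,1)$ is, after raising both (nonnegative) sides to the power $d$, equivalent to $(1-w)^d\ge 1-\tfrac{k}{\sqrt n}$. Here I would apply the elementary bound $(1-w)^d\ge 1-dw+\binom d2 w^2-\binom d3 w^3$, valid for every integer $d\ge 1$ and $w\in[0,1]$ because the fourth derivative of $(1-w)^d$ is nonnegative there (Taylor with Lagrange remainder). Substituting the explicit value of $w$ then reduces the whole statement to a purely algebraic inequality in $\sqrt n$ and $\sqrt{n-1}$ (linked by $(\sqrt n)^2-(\sqrt{n-1})^2=1$); clearing the positive denominators yields a polynomial inequality that I would verify for all $n\ge\tfrac{6d^2}{d^2-1}$.

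The main obstacle is exactly the tightness flagged above: both sides of $(\ast)$ coincide to leading order $n^{-1/2}$, and even the $n^{-1}$ correction cancels, so the inequality is decided only at order $n^{-3/2}$. Consequently, crude convexity estimates such as Bernoulli's inequality $(1-w)^d\ge 1-dw$ are useless here, since they discard the decisive term; one must retain the quadratic (and, at the threshold value of $k$, effectively the cubic) Taylor contribution. The hypothesis $n\ge\tfrac{6d^2}{d^2-1}$ is precisely what guarantees that these surviving higher-order terms have the correct sign and dominate the error incurred by replacing $\sqrt{n-1}$ with $\sqrt n$. Balancing these competing $O(n^{-3/2})$ contributions, and confirming that the choice $k^2=\tfrac{6d^2}{d^2-1}$ makes the net nonnegative, is where the real work lies.
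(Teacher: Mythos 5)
Your reduction is correct as far as it goes: the identity $K^d_{\sqrt n}(k)-1=\frac{2d(1-t)}{(d-1)t-(d+1)}$ checks out, the denominator is indeed below $-2$, the positivity of the coefficient $2d-\frac{k(d-1)}{\sqrt{n-1}}$ holds in the stated range, the equivalence with $(\ast)$ is valid, and the bound $(1-w)^d\ge 1-dw+\binom{d}{2}w^2-\binom{d}{3}w^3$ is legitimate (exact for $d=2,3$, nonnegative Lagrange remainder for $d\ge4$). Your heuristic identifying $k$ via the cubic tangency $g(1-\xi)=1-\xi+\frac{d^2-1}{12d^2}\xi^3+O(\xi^4)$ is also correct and explains the paper's constant. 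The genuine gap is precisely at the step you defer with ``I would verify'': the final polynomial inequality is never established, and it is not routine. Expanding your sufficient condition $1-dw+\binom{d}{2}w^2-\binom{d}{3}w^3\ge 1-\frac{k}{\sqrt n}$ in powers of $(n-1)^{-1/2}$, not only do the orders $n^{-1/2}$ and $n^{-1}$ cancel as you note, but at $k^2=\frac{6d^2}{d^2-1}$ the order-$n^{-3/2}$ coefficients cancel \emph{identically} as well (this is exactly what singles out this $k$); the sign is decided only at order $n^{-2}$, where one finds a margin $\frac{k^4(d-1)^2}{4d^3}\,n^{-2}+O(n^{-5/2})$. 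So after clearing denominators you face an algebraic inequality in $\sqrt n$ and $\sqrt{n-1}$, with coefficients involving $\sqrt{6/(d^2-1)}$, that is tight through three orders and must be proved uniformly for all $d\ge2$ and all integers $n\ge\frac{6d^2}{d^2-1}$ -- a substantive task, not a footnote. In addition, for $d\ge4$ the cubic truncation can be negative when $w$ is close to $1$, and near the lower endpoint $w$ is not small (e.g.\ $w\approx0.73$ for $d=2$, $n=8$), so the small-$n$ regime needs separate care that the proposal does not mention.

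For comparison, the paper resolves exactly this difficulty by substituting $n=r^2\frac{6d^2}{d^2-1}$ with $r\ge1$ and then $z=(1-\frac1r)^{1/d}$, reducing \eqref{11} after squaring to a single polynomial inequality $G_d(z)\ge0$ on $(0,1)$, which is then settled by exhibiting the factorization $G_d(z)=(1-z)^5\sum_{i=0}^{2d-3}\alpha_i z^i$ with strictly positive coefficients -- the fifth-order vanishing at $z=1$ being the structural counterpart of the three-fold cancellation your expansion detects. Your route (rational rearrangement plus binomial truncation) is genuinely different and could in principle be completed, since the asymptotic margin computed above is positive; but until you supply an analogue of that factorization, or some other uniform-in-$d$ verification of your cleared inequality on the whole range $n\ge\frac{6d^2}{d^2-1}$, what you have is a correct reduction and a correct heuristic for the constant, not a proof of Proposition \ref{prop6}.
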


\begin{proof}
Using the definition $\eqref{kcrk}$ of $K^d_{\sqrt{n}}(k)$ and the fact that $\left(d-1\right)\left(1-\frac{k}{\sqrt{n}}\right)^{\frac{1}{d}}-(d+1)<0$, we can establish the following equivalence for all $k>0$:
\begin{align*}
&K^d_{\sqrt{n}}(k)\leq 1-\frac{k}{\sqrt{n-1}}
\end{align*}
if and only if
\begin{align*}
&-(d+1)\left(1-\frac{k}{\sqrt{n}}\right)^\frac{1}{d}+(d-1)-(d-1)\left(1-\frac{k}{\sqrt{n}}\right)^\frac{1}{d}+(d+1)+\\&\frac{k}{\sqrt{n-1}}\left((d-1)\left(1-\frac{k}{\sqrt{n}}\right)^\frac{1}{d}-(d+1)\right)\geq 0.
\end{align*}

Substitution $k=\frac{\sqrt{6}d}{\sqrt{d^2-1}}$ now yields the equivalence of inequality \eqref{11} and
\begin{align*}
&2d-2d\left(1-\frac{\sqrt{6}d}{\sqrt{(d^2-1)n}}\right)^\frac{1}{d}+\frac{\sqrt{6}d\left((d-1)\left(1-\frac{\sqrt{6}d}{\sqrt{(d^2-1)n}}\right)^\frac{1}{d}-(d+1)\right)}{\sqrt{d^2-1}\sqrt{n-1}}\geq 0.
\end{align*}

Now we devide the last inequality by $d$ and substitute $n=r^2\frac{6d^2}{d^2-1}$, where $r\geq 1$ (since $n\geq \frac{6d^2}{d^2-1}$ by assumption). We get the equivalence of \eqref{11} and
\begin{align*}
&\frac{\sqrt{6}\left((d-1)\left(1-\frac{1}{r}\right)^\frac{1}{d}-(d+1)\right)}{\sqrt{d^2-1}\sqrt{\frac{6d^2r^2}{d^2-1}-1}}-2\left(1-\frac{1}{r}\right)^\frac{1}{d}+2\geq 0.
\end{align*}

By simple transformation that is again equivalent to the following inequality:
\begin{align}\label{22}
F_d(r):=2\sqrt{d^2(6r^2-1)+1}\left(\left(1-\frac{1}{r}\right)^\frac{1}{d} - 1\right) + \sqrt{6}\left((-d + 1)\left(1-\frac{1}{r}\right)^\frac{1}{d} + (d + 1)\right)\leq 0.
\end{align}

We will show that the inequality \eqref{22} holds on $[1, \infty)$ for all $d\geq 2$:

First, notice that $F_d(1)=\sqrt{6}(d+1)-2\sqrt{5d^2+1}\leq 0$ for $d>1$.

For $r>1$, we can show the inequality by using another substitution: $z=(1-\frac{1}{r})^\frac{1}{d}$, i.e  $r=(1- z^d)^{-1}$, where $0<z<1$. Via this substitution \eqref{22} turns into 
\begin{align*}
&2(z-1)\sqrt{\frac{6d^2}{(z^d-1)^2}-d^2+1}+\sqrt{6}[(-d+1)z+(d+1)]\leq 0,
\end{align*}
which is equivalent to
\begin{align*}
2(1-z)\sqrt{\frac{6d^2}{(z^d-1)^2}-d^2+1}\geq \sqrt{6}[(-d+1)z+(d+1)].
\end{align*}
Taking the square of both sides (which are positive under our assumptions) now yields the equivalent inequality
\begin{align*}
4(1-z)^2\left(\frac{6d^2}{(z^d-1)^2}-d^2+1\right)-6[(-d+1)z+(d+1)]^2\geq 0.
\end{align*}

Multiplying this last inequality with $(z^d-1)^2$, we obtain the equivalence of inequality \eqref{22} and
\begin{align}\label{Gd}
\nonumber&G_d(z):= 4(1-z)^2\left(-d^2 (z^{2d} - 2 z^d -5) + (z^d - 1)^2\right)\\&-6(z^d-1)^2[(-d+1)z+(d+1)]^2\geq 0
\end{align}
for all $0<z<1$ and $d\geq 2$.

To see that \eqref{Gd} holds, notice that for every $d\geq 2$ the polynomial $G_d(z)$, which is of order $2d+2$, can be written as follows:
\begin{align*}
G_d(z)=(1-z)^5\sum_{i=0}^{2d-3}\alpha_i z^i,
\end{align*}
where $\alpha_i\in\mathbb{N}$ for $0\leq i\leq 2d-3$, in particular every coefficient $\alpha_i$ is (strictly) positive (for example, $G_2(z)=(1-z)^5(18z+30)$ and $G_3(z)=(1-z)^5(56z^3+120z^2+168z+88)$).

This observation immediately yields inequality \eqref{Gd} and thus \eqref{22} holds on $[1, \infty)$ for all $d\geq 2$, which is, as stated above, equivalent to the claim of the proposition.
\end{proof}

After dealing with an upper bound in Proposition \ref{prop6}, we proceed to show a lower bound for $K^d_{\sqrt{n}}\left(k\right)$ of the same form (i.e.\ $1-\frac{k}{\sqrt{n-1}}$), but at a different value of $k=k(d)$. We choose this value to be $1-\frac{y_0}{x_0}$ at criticality (i.e.\ $y_0=y_0(\beta_c,d)$ and $x_0=x_0(\beta_c,d)$).

\begin{prop}\label{prop7}~\\
Again with notation as in $\eqref{kcrk}$, for all $d\geq 2$, $n\in\mathbb{N}$ and $\beta=\beta_c=\tanh^{-1}(\frac{1}{d})$:
\begin{align*}
K^d_{\sqrt{n}}\left(1-\frac{y_0}{x_0}\right)\geq 1-\left(1-\frac{y_0}{x_0}\right)\frac{1}{\sqrt{n-1}}.
\end{align*}
\end{prop}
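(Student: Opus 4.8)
The plan is to mirror the proof of Proposition \ref{prop6}, only reversing the direction of every inequality and, at the decisive moment, exploiting that the constant $k=1-\frac{y_0}{x_0}$ is merely bounded by $1$ rather than equal to the critical value $k_c=\frac{\sqrt6\,d}{\sqrt{d^2-1}}$. First I would repeat the opening of Proposition \ref{prop6}: using the definition \eqref{kcrk} of $K^d_{\sqrt n}(k)$ and the negativity of its denominator $(d-1)(1-\frac{k}{\sqrt n})^{1/d}-(d+1)$, the asserted inequality $K^d_{\sqrt n}(k)\ge 1-\frac{k}{\sqrt{n-1}}$ is equivalent (for $n\ge 2$; for $n=1$ the right-hand side is $-\infty$ and the claim is trivial) to
\[
2d(1-w)\le \frac{k}{\sqrt{n-1}}\big((d+1)-(d-1)w\big),\qquad w:=\Big(1-\tfrac{k}{\sqrt n}\Big)^{\frac1d},
\]
which is exactly the reverse of the corresponding step in Proposition \ref{prop6}.

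Next I would substitute $z:=w$, so that $1-z^d=\frac{k}{\sqrt n}$ and hence $\frac{k}{\sqrt{n-1}}=\frac{k(1-z^d)}{\sqrt{k^2-(1-z^d)^2}}$ (the root is real and positive because $1-z^d=\frac{k}{\sqrt n}\le\frac{k}{\sqrt2}<k$ for $n\ge2$). For $n\ge2$ both sides of the displayed inequality are nonnegative, so squaring is an equivalence, and after rearranging the claim becomes
\[
k^2 D_d(z)\le N_d(z),\quad N_d(z):=4d^2(1-z)^2(1-z^d)^2,\ \ D_d(z):=4d^2(1-z)^2-(1-z^d)^2\big((d+1)-(d-1)z\big)^2.
\]

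Here is the observation that makes Proposition \ref{prop7} much softer than Proposition \ref{prop6}. By Remark \ref{rem3} together with the positivity of $x_0,y_0$ in \eqref{xy0} we have $0<\frac{y_0}{x_0}<1$, whence $k=1-\frac{y_0}{x_0}\in(0,1)$ and $k^2<1$. It therefore suffices to prove the $k$-free inequality $N_d(z)\ge D_d(z)$ for all $z\in[0,1)$: where $D_d(z)>0$ this gives $k^2 D_d(z)<D_d(z)\le N_d(z)$, and where $D_d(z)\le 0$ the bound $k^2 D_d(z)\le 0\le N_d(z)$ is automatic. In other words, unlike Proposition \ref{prop6}—where one needs the sharp value $k_c$ at which $N_d/D_d\to k_c^2$ as $z\to1$—here I only need the crude bound $N_d/D_d\ge 1$, and the delicate cancellation that forced the full analysis of \eqref{Gd} never has to be confronted.

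Finally I would establish $J_d(z):=N_d(z)-D_d(z)\ge0$ on $[0,1)$ exactly as the polynomial $G_d$ is treated in \eqref{Gd}: factor out the (high-order) zero at $z=1$ and verify that the cofactor has nonnegative coefficients. A direct computation yields $J_d(z)=(1-z)^4 P_d(z)$ with $\deg P_d=2d-2$ and leading coefficient $4d^2+(d-1)^2$; for instance $J_2(z)=(1-z)^4(17z^2+30z+9)$, and the endpoint expansion $J_d(z)\sim\frac{10d^4+2d^2}{3}(1-z)^4$ as $z\to1$ confirms $P_d(1)>0$. I expect this last step to be the main obstacle: proving that \emph{every} coefficient of $P_d$ is nonnegative for all $d\ge2$. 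This is the precise analogue of the positivity claim for the $\alpha_i$ in Proposition \ref{prop6}, and it is likely the only genuinely computational part of the argument—though it should be strictly easier than in Proposition \ref{prop6}, since the slack provided by $k^2<1$ means no sharp constant has to be matched.
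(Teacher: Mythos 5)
Your route is genuinely different from the paper's. The paper proves the stronger statement \eqref{k1} for \emph{all} $k\le 1$ by recasting it as $Q_{d,k}(n)\le 0$ in \eqref{lst} and then establishing Claim \ref{claim} -- that $n\mapsto Q_{d,k}(n)$ is increasing and tends to $0$ -- via two successive differentiations (in $n$, then in $k$) and a sign analysis of $S_{d,n}(k)$; no polynomial factorization is needed. You instead transplant the algebraic scheme of Proposition \ref{prop6}, and your reductions are all correct: the equivalence with $2d(1-w)\le \frac{k}{\sqrt{n-1}}\bigl((d+1)-(d-1)w\bigr)$, the triviality at $n=1$, the substitution $1-z^d=\frac{k}{\sqrt n}$ giving $\frac{k}{\sqrt{n-1}}=\frac{k(1-z^d)}{\sqrt{k^2-(1-z^d)^2}}$, the legitimacy of squaring, and the resulting form $k^2D_d(z)\le N_d(z)$. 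The structural insight -- that $k=1-\frac{y_0}{x_0}\in(0,1)$ by Remark \ref{rem3}, so $k^2<1$ and the claim reduces to the $k$-free inequality $J_d:=N_d-D_d\ge 0$ on $[0,1)$ -- is sound and nicely explains why this bound, unlike Proposition \ref{prop6}, involves no sharp constant. Your sample computations also check out: I verified $J_2(z)=(1-z)^4(17z^2+30z+9)$, the leading coefficient $4d^2+(d-1)^2$, the endpoint constant $\frac{10d^4+2d^2}{3}$ (so $P_d(1)>0$ and the order of the zero at $z=1$ is exactly $4$), and in addition $J_3(z)=(1-z)^4(40z^4+72z^3+100z^2+48z+16)$.

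The genuine gap is the terminal step, which you flag but do not close: you need $J_d(z)\ge 0$ on $[0,1)$ for \emph{every} $d\ge 2$, and coefficient nonnegativity of $P_d$ is only conjectured from small cases. This is not a formality. Dividing $J_d\ge 0$ by $(1-z)^2$, it is equivalent to
\begin{align*}
\left(1+z+\dots+z^{d-1}\right)^2\left[\bigl((d+1)-(d-1)z\bigr)^2+4d^2(1-z)^2\right]\ge 4d^2,
\end{align*}
with equality in the limit $z\to 1$; crude lower bounds on the geometric sum (Bernoulli- or AM--GM-type) fail near one endpoint or the other, so a real computation for general $d$ cannot be avoided. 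To be fair, the paper tolerates exactly this level of incompleteness inside Proposition \ref{prop6} (the positivity of the $\alpha_i$ in \eqref{Gd} is asserted with examples for $d=2,3$, not proved), so your proposal is ``as complete as'' the paper's Proposition \ref{prop6}; but measured against the paper's actual proof of Proposition \ref{prop7}, which closes via the monotonicity-in-$n$ argument without any polynomial positivity claim, your argument as written has a hole at its decisive final step. Either supply a proof that all coefficients of $P_d$ are nonnegative for all $d\ge2$ (for instance via an explicit expansion of $J_d$ in powers of $(1-z)$, which your endpoint computation already begins), or fall back on the paper's calculus route.
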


\begin{proof}
We will, in fact, show the inequality
\begin{align}\label{k1}
K^d_{\sqrt{n}}(k)\geq 1-\frac{k}{\sqrt{n-1}}
\end{align}
for all $k\leq 1$.

As in the proof of Proposition \ref{prop6}, the inequality \eqref{k1} is equivalent to
\begin{align*}
&-(d+1)\left(1-\frac{k}{\sqrt{n}}\right)^\frac{1}{d}+(d-1)-(d-1)\left(1-\frac{k}{\sqrt{n}}\right)^\frac{1}{d}+(d+1)\\
&+\frac{k}{\sqrt{n-1}}\left((d-1)\left(1-\frac{k}{\sqrt{n}}\right)^\frac{1}{d}-(d+1)\right)\leq 0,
\end{align*}
therefore also to
\begin{align*}
\frac{d(k-2\sqrt{n-1})}{\sqrt{n-1}}\left(\left(\frac{\sqrt{n}-k}{\sqrt{n}}\right)^\frac{1}{d}-1\right)-\frac{k}{\sqrt{n-1}}\left(\left(\frac{\sqrt{n}-k}{\sqrt{n}}\right)^\frac{1}{d}+1\right)\leq 0
\end{align*}
and, finally, to
\begin{align}\label{lst}
Q_{d,k}(n):=\left(d\left(k-2\sqrt{n-1}\right)-k\right) \left(\left(\frac{\sqrt{n}-k}{\sqrt{n}}\right)^{1/d}-1\right)-2k\leq 0.
\end{align}

Notice that to show the last inequality, it suffices to prove the following claim.

\begin{claim}\label{claim} With notation as in \eqref{lst}, the function $n\mapsto Q_{d,k}(n)$ converges towards zero ($Q_{d,k}(n)\to 0$ as $n\to\infty$) and is increasing on $[1,\infty ]$ for any $d\geq 2$ and $0<k<1$.
\end{claim}

\begin{proof}[Proof of Claim \ref{claim}]

\item[$\bullet$]\quad By \eqref{lst}, $Q_{d,k}(n)\to 0$ as $n\to\infty$ can be rewritten as \[
\lim_{n\to\infty}\left(d\left(k-2\sqrt{n-1}\right)-k\right) \left(\left(\frac{\sqrt{n}-k}{\sqrt{n}}\right)^{1/d}-1\right)=2k,
\]which follows directly from \[
\lim_{n\to\infty}\left(\frac{\sqrt{n} - k}{\sqrt{n}}\right)^{\frac{1}{d}}=1\] and \[  \lim_{n\to\infty} \sqrt{n}\left(\left(\frac{\sqrt{n} - k}{\sqrt{n}}\right)^{\frac{1}{d}} - 1\right)= -\frac{k}{d}.
\]

\item[$\bullet$] \quad To show that $n\mapsto Q_{d,k}(n)$ is increasing on $[1, \infty]$, it suffices to prove that $\frac{d}{dn}~Q_{d,k}(n)>0$ on that interval; we differentiate \eqref{lst} to obtain
\begin{align}\label{rk}
\nonumber R_{d,n}(k):=\frac{d}{dn}~Q_{d,k}(n)&=\frac{1}{d}\left(\frac{1}{2n} - \frac{\sqrt{n}-k}{2n^{1.5}}\right)\left(d(k-2\sqrt{n - 1}) - k\right) \left(\frac{\sqrt{n} - k)}{\sqrt{n}}\right)^{\frac{1}{d}-1}\\&-\frac{d}{\sqrt{n-1}}\left(\left(\frac{\sqrt{n} - k)}{\sqrt{n}}\right)^{\frac{1}{d}}-1 \right).
\end{align}

First, notice that 
\begin{align}\label{r0}
R_{d,n}(0)=0,
\end{align}
i.e\ $\frac{d}{dn}~Q_{d,k}(n)=0$ at $k=0$, for any $d\geq 2$ and $n\geq 1$.

Next, we will show that  $k\mapsto R_{d,n}(k)$ is increasing on $[0,1)$ for any $d\geq 2$ and $n\geq 1$ - by showing $\frac{d}{dk}~R_{d,n}(k)>0$ on $[0,1)$. We differentiate \eqref{rk} to obtain 
\begin{align*}
\frac{d}{dk}~R_{d,n}(k)= \left(\frac{\sqrt{n}-k}{\sqrt{n}}\right)^{\frac{1}{d}}\frac{S_{d,n}(k)}{2d^2n\sqrt{n-1}(k-\sqrt{n})^2},
\end{align*}
where $S_{d,n}(k)$ is given by
\begin{align*}
S_{d,n}(k)&:=-d^2k^2\sqrt{n-1}-2d^2kn +2d^2k\sqrt{n - 1}\sqrt{n}+2d^2\sqrt{n}\\&+2dkn-2dk\sqrt{n - 1}\sqrt{n}-2dk+k^2\sqrt{n - 1}.
\end{align*}

Notice the equivalence
\begin{align}\label{eqs}
S_{d,n}(k)>0\text{ if and only if }\frac{d}{dk}~R_{d,n}(k)>0.
\end{align}

We rewrite $S_{d,n}$ and provide an estimate for $0<k<1$: 
\begin{align*}
S_{d,n}(k)&=2d^2\sqrt{n}-2dk[(d-1)n-(d-1)\sqrt{n-1}\sqrt{n}+1]-k^2(d^2-1)\sqrt{n-1}\\
&>2d^2\sqrt{n}-2d[(d-1)n-(d-1)\sqrt{n-1}\sqrt{n}+1]-(d^2-1)\sqrt{n-1}\\&=S_{d,n}(1),
\end{align*}
which is possible, since $(d-1)n-(d-1)\sqrt{n-1}\sqrt{n}+1>0$.

Further analysis shows that 
\begin{align*}
S_{d,n}(1)&=2d^2\sqrt{n}-2d[(d-1)n-(d-1)\sqrt{n-1}\sqrt{n}+1]-(d^2-1)\sqrt{n-1}\\&\geq 0
\end{align*}
for $d\geq 2$ and $n\geq 1$, so that
\begin{align*}
S_{d,n}(k)>S_{d,n}(1)\geq 0
\end{align*}
holds for all $d\geq 2$, $n\geq 1$ and $0<k<1$.

By \eqref{eqs} this yields
\begin{align*}
\frac{d}{dk}~R_{d,n}(k)>0\text{ on }[0,1)
\end{align*}
for $d\geq 2$ and $n\geq 1$, which - together with \eqref{r0} - implies 
\begin{align*}
\frac{d}{dn}~Q_{d,k}(n)=R_{d,n}(k)>0
\end{align*}
for all $d\geq2$, $n\geq 1$ and $0<k<1$.
\end{proof}

The claim now yields inequality \eqref{lst}, which is equivalent to \eqref{k1}, and Proposition \ref{prop7} follows, since $0<\left(1-\frac{y_0}{x_0}\right)<1$.
\end{proof}

\subsection{Proof of Lemma 3}
With these preparations at hand, we are ready to prove our primary results. First, we use the auxiliary results from Subsection \ref{IIIa} to show Lemma \ref{lemma}:

\begin{proof}[Proof of Lemma 3]The claim of the theorem follows by induction over $n$:

\item[$\bullet$\quad The lower bound is trivial for $n\leq \frac{6d^2}{d^2-1};$] the upper bound holds for $n=1$, since\[
\frac{y_0}{x_0}\leq 1-\left(1-\frac{y_0}{x_0}\right)\frac{1}{\sqrt{1}}=\frac{y_0}{x_0}.\]

\item[$\bullet$]\quad For the inductive step, we consider the lower and the upper bounds separately.

\underline{Lower bound}:

Assume $\frac{y_{n-1}}{x_{n-1}}\geq 1-\left(\frac{\sqrt{6}d}{\sqrt{d^2-1}}\right)\frac{1}{\sqrt{n}}$ for some $n\geq \frac{6d^2}{d^2-1}$, then by Proposition \ref{prop6}:\[
\frac{y_{n-1}}{x_{n-1}}\geq 1-\left(\frac{\sqrt{6}d}{\sqrt{d^2-1}}\right)\frac{1}{\sqrt{n}}\geq K^d_{\sqrt{n+1}}(\frac{\sqrt{6}d}{\sqrt{d^2-1}}).\]

Given that, we can use Proposition \ref{prop5}:\[
\frac{y_{n-1}}{x_{n-1}}\geq K^d_{\sqrt{n+1}}(\frac{\sqrt{6}d}{\sqrt{d^2-1}})
\]\[\Longrightarrow\frac{y_{n}}{x_{n}}=g\left(\frac{y_{n-1}}{x_{n-1}}\right)\geq 1-\left(\frac{\sqrt{6}d}{\sqrt{d^2-1}}\right)\frac{1}{\sqrt{n+1}} \text{.}\]

\underline{Upper bound}:

Assume $\frac{y_{n-1}}{x_{n-1}}\leq 1-\left( 1-\frac{y_0}{x_0}\right)\frac{1}{\sqrt{n}}$ for some $n\geq 1$, then by Proposition \ref{prop7}:\[
\frac{y_{n-1}}{x_{n-1}}\leq 1-\left( 1-\frac{y_0}{x_0}\right)\frac{1}{\sqrt{n}}\leq K^d_{\sqrt{n+1}}\left(1-\frac{y_0}{x_0}\right).\]

Given that, we can use Proposition \ref{prop5}:\[
\frac{y_{n-1}}{x_{n-1}}\leq K^d_{\sqrt{n+1}}\left(1-\frac{y_0}{x_0}\right)
\]\[\Longrightarrow\frac{y_{n}}{x_{n}}=g\left(\frac{y_{n-1}}{x_{n-1}}\right)\leq 1-\left(1-\frac{y_0}{x_0}\right)\frac{1}{\sqrt{n+1}} \text{.}\]

This completes the induction and the proof of Lemma \ref{lemma}. 
\end{proof}

\section{Determining the critical 1-arm exponent}\label{IV}
Given the recursive respresentation from Theorem \ref{thm2}, our main result is an (almost) immediate consequence of Lemma \ref{lemma}, which was proven in the last section. All is left to do is to express Lemma \ref{lemma} in terms of the critical 1-arm exponent $\rho$ (as defined in \eqref{rho}). Therefore we proceed as follows:

\begin{proof}[Proof of Theorem \ref{thm1}]
Notice that for $0<y\leq x$
\begin{align*}
\frac{2}{x+1}-1\leq\frac{2}{y+1}-1.
\end{align*}

Thus, let $0<k_1<k_2$, such that   $\left(1-\frac{k_1}{\sqrt{n}}\right)\geq\frac{y_{n-1}}{x_{n-1}}\geq \left(1-\frac{k_2}{\sqrt{n}}\right)$ for $n>k_2^2$, then 
\begin{align}\label{cq}
\frac{2}{2-\frac{k_2}{\sqrt{n}}}-1\geq\frac{2}{\frac{y_{n-1}}{x_{n-1}}+1}-1\geq\frac{2}{2-\frac{k_1}{\sqrt{n}}}-1.
\end{align}

Such a choice of constants $k_1, k_2$ is possible for $d\geq2$ at $\beta=\beta_c$ by Lemma \ref{lemma}.

Furthermore, for  $0<x<1$ we can estimate:
\begin{align}\label{cqq}
1-x>\frac{2}{x+1}-1>\frac{1-x}{2}.
\end{align}

Combining \eqref{cq} and \eqref{cqq}, we obtain for $ d\geq 2,$ $\beta=\beta_c$ and $n>k^2_2:$
\begin{align*}
\frac{k_2}{\sqrt{n}}>\frac{2}{\frac{y_{n-1}}{x_{n-1}}+1}-1>\frac{k_1}{2\sqrt{n}}
\end{align*}
and the claim of the theorem follows by Theorem \ref{thm2}.
\end{proof}

\subsection*{Acknowledgement.} The authors are grateful to Aernout van Enter and Akira Sakai for commenting an earlier version of the manuscript. 

\bibliographystyle{plain}
	
\end{document}